\theoremstyle{plain}
\newtheorem{thm}[subsection]{Theorem}
\newtheorem{lem}[subsection]{Lemma}
\newtheorem{cor}[subsection]{Corollary}
\newtheorem{rem}[subsection]{Remark}
\theoremstyle{definition}
\numberwithin{equation}{section} \setcounter{tocdepth}{1}
\newcommand{\BID}{{\rm BID}}
\newcommand{\beq}{\begin{eqnarray}}
\newcommand{\eeq}{\end{eqnarray}}
\newcommand{\beqs}{\begin{eqnarray*}}
\newcommand{\eeqs}{\end{eqnarray*}}
\begin{document}
\title [\texttt{On the Extremal Graphs with respect to Bond Incident Degree Indices}]{On the Extremal Graphs with respect to Bond Incident Degree Indices}
\author{Akbar Ali$^ \MakeLowercase{a}$ and Darko Dimitrov$^  \MakeLowercase{b}$}
 \address{$^{a}$Department of Mathematics, University of Management \& Technology, Sialkot-Pakistan.}
\email{akbarali.maths@gmail.com}
 \address{$^{b}$Hochschule f\"{u}r Technik und Wirtschaft Berlin, Germany \& Faculty of Information Studies, Novo mesto, Slovenia.}
\email{darko.dimitrov11@gmail.com}


\begin{abstract}
Many existing degree based topological indices can be clasified as bond incident degree (BID) indices,
whose general form is $\BID(G)=$ $\sum_{uv\in E(G)}$ $\Psi(d_{u},d_{v})$, where $uv$ is the edge
connecting the vertices $u,v$ of the graph $G$, $E(G)$ is the edge set of $G$, $d_{u}$ is the
degree of the vertex $u$ and $\Psi$ is a non-negative real valued (symmetric) function of $d_{u}$
and $d_{v}$. Here, it has been proven that if the extension of $\Psi$ to the interval $[0,\infty)$
satisfies certain conditions then the extremal $(n,m)$-graph with respect to the BID index
(corresponding to $\Psi$) must contain at least one vertex of degree $n-1$.
It has been shown that these conditions are satisfied for the general sum-connectivity index (whose special cases are:
the first Zagreb index and the Hyper Zagreb index), for the general Platt index (whose special cases are:
the first reformulated Zagreb index and the Platt index) and for the variable sum exdeg index.
Applying aforementioned result, graphs with maximum values of the aforementioned BID indices
among tree, unicyclic, bicyclic, tricyclic and tetracyclic graphs were characterized.
Some of these results are new and the already existing results are proven in a
shorter and more unified way.


\end{abstract}
\maketitle

\section{Introduction}

It is well known fact that molecules can be represented by graphs in which vertices correspond to the atoms while edges represent the covalent bonds between atoms \cite{EsBo2013,Tr92}. Predicting physicochemical properties of molecules is considered to be a fascinating issue in theoretical chemistry. Many predictive methods have been development for correlating molecular structures with their physicochemical properties. One of the simplest such methods involves topological indices \cite{Balaban13}. Topological indices are numerical quantities of a graph which are invariant under graph isomorphism. The Wiener index (a distance based topological index) is the oldest topological index, which was devised in 1947 for predicting the boiling points of paraffins \cite{Wiener47}. The Platt index ($Pl$), proposed in 1952 for predicting paraffin properties \cite{Platt52}, belongs to the oldest degree based topological indices:
\[Pl(G)=\sum_{uv\in E(G)}(d_{u}+d_{v}-2),\]
where $uv$ is the edge connecting the vertices $u,v$ of the graph $G$, $E(G)$ is the edge set of $G$ and $d_{u}$ is the degree of the vertex $u$. Several authors (see for example \cite{Barysz86,Devillers99}) used the notation $F$ for representing the Platt index. However, this notation is being also used for the forgotten topological index \cite{Furtula15}. Thereby, we use $Pl$, instead of $F$, for representing the Platt index. It is worth mentioning here that the Platt index can be written as $Pl(G)=M_{1}(G)-2m$ where $m$ is the size of $G$ and $M_{1}$ is the first Zagreb index, appeared in 1972 within the study of total $\pi$-electron energy \cite{Gutman72}, which can be defined as:
\[M_{1}(G)=\sum_{uv\in E(G)}(d_{u}+d_{v}).\]
Because of the relation $Pl(G)=M_{1}(G)-2m$ and the following lemma, it is clear that the Platt index is also related to the size of $L(G)$, the line graph of $G$:
\begin{lem}\label{lem1}
\cite{Ha69} If $G$ is an $(n,m)$-graph then the size of $L(G)$ is $\frac{M_{1}(G)}{2}-m$.
\end{lem}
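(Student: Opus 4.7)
The plan is a direct double-counting argument. Recall that the line graph $L(G)$ has $E(G)$ as its vertex set, and two such vertices are joined in $L(G)$ precisely when the corresponding edges of $G$ share an endpoint. Consequently, every edge of $L(G)$ can be uniquely associated with an unordered pair $\{e,e'\}$ of edges of $G$ meeting at some common vertex $v$ of $G$.

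Next, I would count these pairs by localizing at each vertex. For a fixed $v\in V(G)$ with degree $d_v$, the number of unordered pairs of distinct edges of $G$ incident with $v$ is exactly $\binom{d_v}{2}$. Since each pair of adjacent edges in $G$ shares exactly one vertex (edges are simple), summing over $v$ counts each edge of $L(G)$ once, giving
\[
|E(L(G))|=\sum_{v\in V(G)}\binom{d_v}{2}=\frac{1}{2}\sum_{v\in V(G)}d_v^{2}-\frac{1}{2}\sum_{v\in V(G)}d_v .
\]

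The final step is to identify the two sums on the right. By the handshaking lemma, $\sum_{v}d_v=2m$. For the quadratic sum, rewriting the Zagreb index by grouping contributions vertex-by-vertex yields
\[
M_{1}(G)=\sum_{uv\in E(G)}(d_u+d_v)=\sum_{v\in V(G)}d_v^{2},
\]
because each vertex $v$ contributes $d_v$ once for every edge incident with it. Substituting these two identities into the previous display produces $|E(L(G))|=\tfrac{M_1(G)}{2}-m$, as required.

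There is essentially no obstacle here; the only point that deserves care is verifying that the alternative identity $M_{1}(G)=\sum_{v} d_v^{2}$ is the form actually needed, and that the pair-counting really does enumerate edges of $L(G)$ exactly once (which follows because $G$ has no multi-edges, so two distinct edges meet in at most one vertex).
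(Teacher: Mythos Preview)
Your argument is correct and is precisely the standard proof of this classical fact. Note, however, that the paper does not supply its own proof of this lemma: it is stated with a citation to Harary's textbook \cite{Ha69} and used as a known result. So there is no ``paper's proof'' to compare against; your double-counting via $\sum_{v}\binom{d_v}{2}$ together with the identity $M_1(G)=\sum_v d_v^2$ is exactly the argument one finds in standard references.
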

Gutman and Estrada \cite{Gutman96} proposed line-graph-based topological indices:
if $I(G)$ is a topological index that can be computed from a graph $G$, then there is another
topological index $i(G)$ such that $i(G)=I(L(G))$. Applying this idea to first general Zagreb index
(also known as zeroth-order general Randi\'{c} index) \cite{Li05}, gives general Platt index:
\[Pl_{\alpha}(G)=\sum_{uv\in E(G)}(d_{u}+d_{v}-2)^{\alpha},\]
$\alpha \in \mathbb{R}\setminus \{0\}$. 
It needs to be mentioned here that $Pl_{2}$
coincides with the first reformulated Zagreb index \cite{Milicevic04}.

Zhou and Trinajsti\'{c} \cite{Zhou10} introduced the general sum-connectivity index:
\[\chi_{\alpha}(G)=\sum_{uv\in E(G)}(d_{u}+d_{v})^{\alpha},\]
$\alpha \in \mathbb{R}\setminus \{0\}.$
Note that $\chi_{1}$ is the first Zagreb index and $\chi_{2}$ is the Hyper-Zagreb index \cite{Shirdel13}. Details about general sum-connectivity index can be found in the recent papers \cite{Zhu16,Akhter15,Cui17,Tache15,Tomescu15,Tomescu16} and related references cited therein.

In 2011, Vuki\v{c}evi\'{c} \cite{Vukicevic11} proposed the following topological index (and named it as the variable sum exdeg index) for predicting the octanol-water partition coefficient of certain chemical compounds:
\[SEI_{a}=\sum_{uv\in E(G)}(a^{d_{u}}+a^{d_{v}}),\]
$a \in \mathbb{R^+} \setminus \{ 1 \}$.
Details about variable sum exdeg index can be found in the papers \cite{Ghalavand17,v-ukicevic-11b,Yarahmadi15}.

So far, numerous topological indices have been and are being introduced. A considerable amount of existing degree based topological indices (such as the (general) Platt index, first Zagreb index, variable sum exdeg index, generalized sum-connectivity index, etc.) can be written \cite{hollas,VuDe10} in the following form:
\begin{equation}\label{Eq.1}
\BID(G)=\sum_{uv\in E(G)}\Psi(d_{u},d_{v}).
\end{equation}
where $\Psi$ is a non-negative real valued (symmetric) function of $d_{u}$ and $d_{v}$. The topological indices which have the form (\ref{Eq.1}), are called \textit{bond incident degree indices} \cite{VuDu11} (BID indices in short). The problem of characterizing extremal graphs with respect to certain BID indices among the set of all $n$-vertex tree, unicyclic, bicyclic, tricyclic and tetracyclic graphs is one of the most studied problems in chemical graph theory. In many cases, the extremal graphs for different BID indices are same or have some common properties.
The main purpose of the present note is to attack the aforementioned problem for general BID indices.
Here, it is proved that if a BID index satisfy some certain conditions then the extremal $(n,m)$-graph with respect to the BID index (under consideration) must contain at least one vertex having degree $n-1$. Using this result, graphs having maximum general sum-connectivity index, general Platt index and variable sum exdeg index in the collections of all $n$-vertex tree, unicyclic, bicyclic, tricyclic and tetracyclic graphs are characterized and several existing results \cite{Zhu16,Tache14,Ghalavand17,Gao17,Ji14,Ji15,Ilic12} are obtained as corollaries.

All the graphs considered in the present study are finite, undirected, simple and connected. An $(n,m)$-graph (connected) is said to be a tree, unicyclic graph, bicyclic graph, tricyclic graph, tetracyclic graph if $m=n-1,m=n,m=n+1,m=n+2,m=n+3$ respectively.
For a vertex $u\in V(G)$, the set of all vertices adjacent with $u$ is denoted by $N_{G}(u)$ (the neighborhood of $u$). A vertex $v\in V(G)$ of degree 1 is called pendent vertex. A vertex which is not pendent is denoted as non-pendent vertex. As usual, the $n$-vertex star graph is denoted by $S_{n}$. The unique $n$-vertex unicyclic graph obtained from $S_{n}$ by adding an edge is denoted by $S_{n}^{+}$. Undefined notations and terminologies from (chemical) graph theory can be found in \cite{EsBo2013,Ha69,Tr92}.

\section{Main Results}


In order to obtain the main results, we need to establish some preliminary results. Firstly, we present the following lemma, related to an extension of the function $\Psi$ introduced in (\ref{Eq.1}).

\begin{lem}\label{thm1}
Let $f$ be an extension of the function $\Psi$  to the interval $[0,\infty)$.\\ \\
(i) Let $G$ be the $(n,m)$-graph with maximum BID index and both expressions $f(x+t,y)-f(x,y)+f(c-t,y)-f(c,y)$, $f(x+t,c-t)-f(x,c)$ are non-negative, where $x\geq c> t\geq1$, $c\geq2$ and $y\geq1$. Furthermore, if one of the following two conditions holds:
\begin{itemize}
  \item The function $f$ is increasing in both variables on the interval $[1,\infty)$ and at least one of the expressions $f(x+t,y)-f(x,y)+f(c-t,y)-f(c,y)$, $f(x+t,c-t)-f(x,c)$ is positive.
  \item The function $f$ is strictly increasing in both variables on the interval $[1,\infty)$.
\end{itemize}
Then the maximum vertex degree in $G$ is $n-1$.\\ \\
(ii) Let $G$ be the $(n,m)$-graph with minimum BID index and both expressions $f(x+t,y)-f(x,y)+f(c-t,y)-f(c,y)$, $f(x+t,c-t)-f(x,c)$ are non-positive, where $x\geq c> t\geq1$, $c\geq2$ and $y\geq1$. Furthermore, if one of the following two conditions holds:
\begin{itemize}
  \item The function $f$ is decreasing in both variables on the interval $[1,\infty)$ and at least one of the expressions $f(x+t,y)-f(x,y)+f(c-t,y)-f(c,y)$, $f(x+t,c-t)-f(x,c)$ is negative.
  \item The function $f$ is strictly decreasing in both variables on the interval $[1,\infty)$.
\end{itemize}
Then the maximum vertex degree in $G$ is $n-1$.

%
\end{lem}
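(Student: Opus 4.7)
I would prove part (i) by contradiction and deduce part (ii) by a symmetric argument. Assume $G$ attains the maximum BID index among $(n,m)$-graphs and that no vertex of $G$ has degree $n-1$. Fix a vertex $u$ of maximum degree $x$; then $x\le n-2$, so some vertex lies outside $N_G(u)\cup\{u\}$, and connectedness of $G$ forces a neighbor $v\in N_G(u)$ that has at least one neighbor $w$ outside $N_G(u)\cup\{u\}$. Set $c:=d_v$ and $t:=|N_G(v)\setminus(N_G(u)\cup\{u\})|$. One checks at once that $x\ge c\ge 2$, $t\ge 1$, and $c-t\ge 1$ (the last because $u\in N_G(v)\cap(N_G(u)\cup\{u\})$), so $(x,c,t)$ lies in the admissible range $x\ge c>t\ge 1$, $c\ge 2$ of the hypothesis.

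The transformation I would then apply is a Kelmans-type move: delete each edge $vw$ with $w\in B:=N_G(v)\setminus(N_G(u)\cup\{u\})$ and add the corresponding edge $uw$. This produces an $(n,m)$-graph $G'$ in which $d_u$ becomes $x+t$, $d_v$ becomes $c-t$, and every other degree is unchanged. Splitting the edges according to the sets $C:=N_G(u)\setminus(N_G(v)\cup\{v\})$, $A:=N_G(u)\cap N_G(v)$ and $B$, together with the surviving edge $uv$, the BID increment is
\begin{align*}
\BID(G')-\BID(G)
&= \sum_{w\in C}\bigl[f(x+t,d_w)-f(x,d_w)\bigr] \\
&\quad + \sum_{w\in A}\Bigl\{\bigl[f(x+t,d_w)-f(x,d_w)\bigr]+\bigl[f(c-t,d_w)-f(c,d_w)\bigr]\Bigr\} \\
&\quad + \bigl[f(x+t,c-t)-f(x,c)\bigr] + \sum_{w\in B}\bigl[f(x+t,d_w)-f(c,d_w)\bigr].
\end{align*}
Monotonicity of $f$ in the first argument makes the $C$-sum and the $B$-sum non-negative (for the latter using $x+t\ge c$); each $A$-summand is exactly the first hypothesized expression at $y=d_w$, hence is $\ge 0$; and the $uv$-term is the second hypothesized expression, again $\ge 0$. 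So $\BID(G')\ge\BID(G)$.

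To upgrade this to a strict inequality, and hence contradict the maximality of $G$, I would split into the two listed alternatives: under the strict-monotonicity alternative the $B$-sum is already strictly positive, because $|B|=t\ge 1$ and $x+t>c$; under the other alternative the positivity of one of the two hypothesized expressions transfers immediately to either an $A$-summand or to the $uv$-term for the specific values of $x,c,t$ coming from $G$. Either way $\BID(G')>\BID(G)$, which is the desired contradiction. Part (ii) is handled by running the same Kelmans move and verifying, under the reversed monotonicity and non-positivity assumptions, that each of the four pieces of the decomposition is $\le 0$, with the appropriate strict inequality forced by the strictly-decreasing or negativity alternative. The main obstacle is calibrating the strictness step: the non-negativity side is routine, but ensuring that the positive (resp.\ negative) case of the hypothesis actually lands on a non-empty piece of the decomposition is exactly why the decomposition is broken into these four specific groups.
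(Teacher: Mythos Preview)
Your proposal is correct and follows essentially the same route as the paper: both argue by contradiction, fix a maximum-degree vertex $u$ with $d_u\le n-2$, locate a neighbor $v$ having neighbors outside $N_G(u)\cup\{u\}$, perform the same Kelmans-type edge shift, and decompose $\BID(G')-\BID(G)$ into the identical four groups (your $C$-, $A$-, $B$-sums and the $uv$-term match the paper's summations term for term, with $x=d_u$, $c=d_v$, $t=s$). If anything, you are more explicit than the paper in checking that the resulting triple $(x,c,t)$ actually lies in the hypothesized range $x\ge c>t\ge1$, $c\ge2$, and in identifying which piece of the decomposition supplies the strict inequality under each of the two alternatives.
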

\begin{proof}
For $n \leq 3$, the statement of the theorem is obvious, since it is assumed that $G$ is a connected graph.
Further, we consider   the case $n \geq 4$.
We prove the part ($i$). The part ($ii$) can be proved in fully analogous way.
Suppose to the contrary that every vertex in $G$ has degree at most $n-2$ and let $u$ be a vertex of $G$ with maximum degree.
The assumption $d_{u}\leq n-2$ implies that there exist vertices $v,v_{1}\in V(G)$ such that $uv,vv_{1}\in E(G)$ but $uv_{1}\not\in E(G)$. Let
$N_{G}(v)\setminus N_{G}(u)=\{u,v_{1},v_{2},...,v_{s}\}$ where $s\geq1$.
Suppose that $G^{*}$ is the graph obtained from $G$ by removing the edges
$v_{1}v,v_{2}v,...,v_{s}v$ and adding the edges $v_{1}u,v_{2}u,...,v_{s}u$.
Note that both graphs $G$ and $G^{*}$ have same vertex set.
The difference between the values of  BID indices of $G^*$ and $G$ is
\begin{eqnarray}\label{Eq.2}
\BID(G^{*})-\BID(G)&=&\sum_{ \substack{ w\in N_{G}(u)\setminus N_{G}(v),   \nonumber \\
        w\neq v}}\left(f(d_{u}+s,d_{w})-f(d_{u},d_{w})\right)  \nonumber\\
&&+\sum_{z\in N_{G}(u)\cap N_{G}(v)}\left(f(d_{u}+s,d_{z})-f(d_{u},d_{z})\right)  \nonumber \\
&&+\sum_{z\in N_{G}(u)\cap N_{G}(v)}\left(f(d_{v}-s,d_{z})-f(d_{v},d_{z})\right)  \nonumber \\
&&+\sum_{i=1}^{s}\left(f(d_{u}+s,d_{v_{i}})-f(d_{v},d_{v_{i}})\right)   \nonumber \\
&&+f(d_{u}+s,d_{v}-s)-f(d_{u},d_{v}).  \nonumber
\end{eqnarray}
Bearing in mind the definition of the function $f$ and its constrains, we obtain that the right-hand side of the
above equation is positive, which is a contradiction to the assumption that $G$ is a graph with maximal $\BID$ index.

\end{proof}

In the sequel, we show that several very well established graph topological indices satisfy the constrains on the function $f$,
and thus for them Lemma~\ref{thm1} holds.

First, consider the substitution $\Psi(d_{u},d_{v})=(d_{u}+d_{v})^{\alpha}$ in (\ref{Eq.1}), where $\alpha$ is a non-zero real number.
By this substitution, we obtain the general sum-connectivity index $\chi_{\alpha}$.

\begin{cor}\label{cor1}
For $\alpha \geq1$, if an $(n,m)$-graph $G$ has maximum $\chi_{\alpha}$ value then maximum vertex degree in $G$ is $n-1$.
\end{cor}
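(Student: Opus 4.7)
The plan is to apply Lemma \ref{thm1}(i) with the natural extension $f(x,y)=(x+y)^{\alpha}$ of $\Psi(d_u,d_v)=(d_u+d_v)^{\alpha}$ to $[0,\infty)\times[0,\infty)$. There are three checks to perform: strict monotonicity of $f$ in each variable on $[1,\infty)$, non-negativity of $f(x+t,c-t)-f(x,c)$, and non-negativity of $f(x+t,y)-f(x,y)+f(c-t,y)-f(c,y)$, for the parameter ranges $x\geq c>t\geq 1$, $c\geq 2$, $y\geq 1$.

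Strict monotonicity is immediate: for $\alpha\geq 1$, the partial derivative $\alpha(x+y)^{\alpha-1}$ is positive whenever $x+y>0$, so $f$ is strictly increasing in each variable on $[1,\infty)$. This alone triggers the second bullet of Lemma \ref{thm1}(i), so it suffices to verify the two non-negativity conditions (positivity of at least one of them is \emph{not} required in this branch of the hypothesis).

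The second expression is the easy one and in fact vanishes identically: substituting gives
\begin{equation*}
f(x+t,c-t)-f(x,c) \;=\; (x+t+c-t)^{\alpha}-(x+c)^{\alpha} \;=\; 0,
\end{equation*}
so it is trivially non-negative. For the first expression, I will use the convexity of $s\mapsto (s+y)^{\alpha}$ on $[0,\infty)$ (valid precisely because $\alpha\geq 1$). Rewriting,
\begin{equation*}
f(x+t,y)-f(x,y)+f(c-t,y)-f(c,y) \;=\; \bigl[(x+t+y)^{\alpha}-(x+y)^{\alpha}\bigr]-\bigl[(c+y)^{\alpha}-(c-t+y)^{\alpha}\bigr].
\end{equation*}
Each bracket is the integral of the non-decreasing function $s\mapsto \alpha(s+y)^{\alpha-1}$ over an interval of length $t$; the left bracket is over $[x,x+t]$ while the right one is over $[c-t,c]$. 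Because $x\geq c$, the first interval lies (pointwise) to the right of the second, so the first integral dominates, and the difference is $\geq 0$.

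With all hypotheses verified, Lemma \ref{thm1}(i) yields that the maximum vertex degree of $G$ is $n-1$. The main (only) obstacle is the convexity argument for the first expression; everything else is algebraic simplification or a one-line derivative calculation. Note that the argument fails for $\alpha\in(0,1)$ because $s\mapsto(s+y)^{\alpha}$ becomes concave, which is consistent with the hypothesis $\alpha\geq 1$ in the statement.
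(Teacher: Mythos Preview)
Your proof is correct and follows essentially the same route as the paper: verify the hypotheses of Lemma~\ref{thm1}(i) for $f(x,y)=(x+y)^{\alpha}$, note that $f(x+t,c-t)-f(x,c)=0$, and handle the remaining expression via the convexity of $s\mapsto (s+y)^{\alpha}$. The only cosmetic difference is that the paper invokes Lagrange's Mean Value Theorem on each bracket to write the difference as $\alpha t(\Theta_{1}^{\alpha-1}-\Theta_{2}^{\alpha-1})$ with $\Theta_{1}>\Theta_{2}$, whereas you phrase the same convexity comparison as an integral of the non-decreasing derivative over two translated intervals of equal length; the underlying idea is identical.
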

\begin{proof}
Let $f(x,y)=(x+y)^{\alpha}$ be the extension of the function $\Psi(d_{u},d_{v})=(d_{u}+d_{v})^{\alpha}$ to the interval $[0,\infty)$, where $\alpha \geq1$. It can be easily verified that the function $f$ is strictly increasing in both $x,y$. It holds that
$f(x+t,c-t)-f(x,c)=0$. If $x\geq c>t\geq1$, $c\geq2$ and $y\geq1$ then, by virtue of Lagrange's Mean Value Theorem, there exist numbers $\Theta_{1},\Theta_{2}$ (where $x+y< \Theta_{1}< x+y+t$ and $y+c-t<\Theta_{2}< y+c$) such that
\[f(x+t,y)-f(x,y)+f(c-t,y)-f(c,y)=\alpha t(\Theta_{1}^{\alpha-1}-\Theta_{2}^{\alpha-1}),\]
which is non-negative because $\Theta_{1}>\Theta_{2}$. The desired result follows from Lemma \ref{thm1}.


\end{proof}

The choice $\Psi(d_{u},d_{v})=(d_{u}+d_{v}-2)^{\alpha}$ in  (\ref{Eq.1}),  where $\alpha$ is a non-zero real number,
leads to the general Platt index ($Pl_{\alpha}$). The proof of Corollary \ref{cor3},
related to the general Platt index, is fully analogous to that of Corollary \ref{cor1} and will be omitted.

\begin{cor}\label{cor3}
For $\alpha \geq1$, if an $(n,m)$-graph $G$ has maximum $Pl_{\alpha}$ value then maximum vertex degree in $G$ is $n-1$.
\end{cor}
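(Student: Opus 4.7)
My plan is to mirror the proof of Corollary~\ref{cor1}, substituting the function appropriate to the general Platt index into the template supplied by Lemma~\ref{thm1}. Concretely, I would take $f(x,y) = (x+y-2)^{\alpha}$ as the extension of $\Psi(d_u,d_v) = (d_u+d_v-2)^{\alpha}$ to $[0,\infty)^{2}$; this is well defined and non-negative on $[1,\infty)^{2}$ because $x+y-2 \geq 0$ there. For fixed $y \geq 1$ and $\alpha \geq 1$, the map $x \mapsto (x+y-2)^{\alpha}$ is strictly increasing on $[1,\infty)$ (and symmetrically in $y$), so $f$ satisfies the strict monotonicity demanded by the second bullet of Lemma~\ref{thm1}(i).

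Next I would verify the two non-negativity conditions on the differences of $f$. The simpler one is immediate since
\[
f(x+t,c-t) - f(x,c) \;=\; (x+c-2)^{\alpha} - (x+c-2)^{\alpha} \;=\; 0 .
\]
For the other, I would apply Lagrange's Mean Value Theorem to each of the two differences, producing values $\Theta_1 \in (x+y-2,\,x+y+t-2)$ and $\Theta_2 \in (y+c-t-2,\,y+c-2)$ such that
\[
f(x+t,y) - f(x,y) + f(c-t,y) - f(c,y) \;=\; \alpha t \bigl(\Theta_1^{\alpha-1} - \Theta_2^{\alpha-1}\bigr) .
\]
The hypothesis $x \geq c$ gives $\Theta_1 > x+y-2 \geq c+y-2 > \Theta_2$, and since $\alpha-1 \geq 0$ the power function $z \mapsto z^{\alpha-1}$ is non-decreasing on $[0,\infty)$, which yields $\Theta_1^{\alpha-1} \geq \Theta_2^{\alpha-1}$ and hence non-negativity of the whole expression.

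The argument presents no genuine obstacle, being essentially a routine adaptation of the proof of Corollary~\ref{cor1}: the shift by $-2$ in the base of the power cancels identically in the second non-negativity condition and is harmless for the Mean Value Theorem estimate in the first. The only bookkeeping point I would check is that both $\Theta_1$ and $\Theta_2$ lie in $[0,\infty)$, the domain on which $z^{\alpha-1}$ is monotone; this follows from $c-t \geq 1$ and $y \geq 1$ (since Lemma~\ref{thm1} is applied with integer degrees), which forces $\Theta_2 > y+c-t-2 \geq 0$. With every hypothesis of Lemma~\ref{thm1}(i) in place, I would conclude that the maximum vertex degree in $G$ equals $n-1$.
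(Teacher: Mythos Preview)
Your proposal is correct and follows exactly the route the paper intends: the authors omit the proof, stating it is ``fully analogous to that of Corollary~\ref{cor1}'', and your argument is precisely that analogue with $f(x,y)=(x+y-2)^{\alpha}$ in place of $(x+y)^{\alpha}$. Your extra care in checking that $\Theta_{2}\geq 0$ (using that the lemma is ultimately applied with integer degrees, so $c-t\geq 1$) is a nice touch that handles the only new wrinkle introduced by the $-2$ shift.
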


The substitution $\Psi(d_{u},d_{v})=a^{d_{u}}+a^{d_{v}}$ in  (\ref{Eq.1}), where $a$ is a positive real number different from $1$,
leads to the variable sum exdeg index ($SEI_{a}$).

\begin{cor}\label{cor5}\label{cor5}
Let $G$ be an $(n,m)$-graph. If $G$ has maximum $SEI_{a}$ value for $a>1$ then maximum vertex degree in $G$ is $n-1$.
\end{cor}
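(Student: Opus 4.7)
The plan is to apply Lemma~\ref{thm1}(i) directly, taking $f(x,y) = a^x + a^y$ as the natural extension of $\Psi(d_u,d_v) = a^{d_u} + a^{d_v}$ to $[0,\infty)$. Since $a > 1$, the one-variable function $a^x$ is strictly increasing, so $f$ is strictly increasing in each variable on $[1,\infty)$. This already satisfies the second of the two alternative monotonicity hypotheses in Lemma~\ref{thm1}(i), so it remains only to verify the two non-negativity conditions on the displayed expressions.

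The main (and really only) computational step is to rewrite those two expressions in a transparent form. In $f(x+t,y) - f(x,y) + f(c-t,y) - f(c,y)$ the $a^y$ contributions cancel, leaving $a^{x+t} - a^x + a^{c-t} - a^c$, which I would factor as $(a^t - 1)(a^x - a^{c-t})$. The second expression $f(x+t,c-t) - f(x,c)$ involves no $y$ at all and collapses to the same factorization $(a^t - 1)(a^x - a^{c-t})$. Under the hypotheses $a > 1$, $t \geq 1$, and $x \geq c > t$, the factor $a^t - 1$ is positive and the inequality $a^x \geq a^c > a^{c-t}$ makes the second factor positive as well, so both expressions are in fact strictly positive.

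With the monotonicity and the non-negativity both in hand, the conclusion of Lemma~\ref{thm1}(i) is immediate: the graph $G$ must contain a vertex of degree $n-1$. There is no genuine obstacle here, since the substantive structural work was absorbed into Lemma~\ref{thm1}; what is needed for the variable sum exdeg index is merely the elementary observation that the relevant differences of $a^{(\cdot)}$ admit a clean common factorization when $a>1$. The only item worth flagging is that the factorization relies on the hypothesis $c > t$ (so that $c-t \geq 1$ lies in the domain of the extension) and on $x \geq c$ (so that $a^x \geq a^c > a^{c-t}$); both are built into the statement of the lemma, so nothing further is required.
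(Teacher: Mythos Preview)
Your argument is correct and essentially identical to the paper's own proof: both take $f(x,y)=a^x+a^y$, note strict monotonicity for $a>1$, observe that the two displayed expressions in Lemma~\ref{thm1}(i) coincide and factor as $(a^t-1)(a^x-a^{c-t})>0$, and then invoke Lemma~\ref{thm1}(i).
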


\begin{proof}
Let $f(x,y)=a^{x}+a^{y}$ be the extension of the function $\Psi(d_{u},d_{v})=a^{d_{u}}+a^{d_{v}}$ to the interval $[0,\infty)$.
It can be easily seen that the function $f$ is strictly increasing in both $x,y$ on the interval $[1,\infty)$ for $a>1$.
If $x\geq c>t\geq1$, $c\geq2$ and $y\geq1$, then
\beq \label{eq-sum-con-40}
f(x+t,y)-f(x,y)+f(c-t,y)-f(c,y)&=&f(x+t,c-t)-f(x,c) \nonumber \\
&=&(a^{t}-1)(a^{x}-a^{c-t}) \nonumber
\eeq
is positive for all $a>1$. The desired result follows from Theorem \ref{thm1}.

\end{proof}

The results, established in this section till now, can be of a significant help by characterization of the extremal graphs
with respect to the general sum-connectivity index, the general Platt index and variable sum exdeg index.
Next, we characterize the graphs with maximal values of the above indices in the case
of tree, unicyclic, bicyclic, tricyclic and tetracyclic graphs.


\subsection{Extremal tree, unicyclic, bicyclic, tricyclic and tetracyclic graphs}

First we consider the characterization  of the graphs with maximal
general sum-connectivity index $\chi_{\alpha}$.
Recall that $S_{n}^{+}$ denotes the unique $n$-vertex unicyclic graph obtained from $S_{n}$ by adding an edge.

\begin{thm}\label{thm2}
For $\alpha \geq1$, the graphs $S_{n},S_{n}^{+}$ and $B_{1}$ (depicted in Figure \ref{f1})
have the maximum value of the general sum-connectivity index $\chi_{\alpha}$
among the $n$-vertex trees, unicyclic graphs  and bicyclic graphs, respectively.
Among the $n$-vertex tricyclic graphs, either the graph $G_{4}$ or $G_{5}$ (depicted in Figure \ref{f2})
has the maximum $\chi_{\alpha}$ value, for $\alpha\geq1$ and $n \geq 5$.
Namely,
    \begin{equation*}
                \begin{array}{ll}
                    \chi_{\alpha}(G_{4}) < \chi_{\alpha}(G_{5}) & \quad \text{for} \quad n \geq 5, 1 < \alpha < 2, \\
                    \chi_{\alpha}(G_{4}) > \chi_{\alpha}(G_{5}) & \quad \text{for} \quad n \geq 5, \alpha > 2, \\
                    \chi_{\alpha}(G_{4}) = \chi_{\alpha}(G_{5}) & \quad \text{for} \quad  n \geq 5, \alpha =1,2.
                \end{array}
    \end{equation*}
Finally,
$H_{4}$, respectively $H_5$, (depicted in Figure \ref{f3}) has the maximum $\chi_{\alpha}$
value, for $\alpha\geq1$ and $n \geq 6$, respectively for $\alpha\geq1$ and $n=5$, among the $n$-vertex tetracyclic graphs.
\end{thm}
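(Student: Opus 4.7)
The plan is to reduce each statement to a small finite combinatorial comparison by invoking Corollary \ref{cor1}. If $G$ is an $(n,m)$-graph maximising $\chi_\alpha$ for $\alpha \geq 1$, then $G$ possesses a vertex $u$ of degree $n-1$, and hence is determined by the graph $H := G - u$ on $n-1$ vertices with $m-(n-1)$ edges. Consequently, for the five cases one needs to enumerate isomorphism classes of simple graphs with $0, 1, 2, 3, 4$ edges on $n-1$ vertices, compute $\chi_\alpha$ of the resulting graph $G$ in each case, and compare.

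For trees and unicyclic graphs, there is only one admissible $H$, yielding $S_n$ and $S_n^{+}$ immediately. For bicyclic graphs there are two isomorphism types of $H$: two incident edges (giving $B_1$), or two disjoint edges (call it $B_1'$). One computes
\[
\chi_\alpha(B_1)-\chi_\alpha(B_1') = \bigl((n+2)^\alpha - 2(n+1)^\alpha + n^\alpha\bigr) + 2\bigl(5^\alpha - 4^\alpha\bigr),
\]
which is non-negative for $\alpha \geq 1$ (the first bracket by convexity of $x\mapsto x^\alpha$, the second strictly), so $B_1$ is extremal.

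For tricyclic graphs, the isomorphism types of $H$ are $K_3$, $K_{1,3}$, $P_4$, $P_3\cup K_2$, and $3K_2$ (the last two requiring $n$ large enough to be realisable). Direct computation of $\chi_\alpha$ for each, combined with pairwise comparisons in the style of the bicyclic step (every such inequality reducing to convexity of $x\mapsto x^\alpha$), shows that the $K_3$- and $K_{1,3}$-variants dominate the remaining three; these correspond to $G_4$ and $G_5$ of Figure \ref{f2}. Their difference reduces, up to an overall sign, to
\[
(n+3)^\alpha - 3(n+2)^\alpha + 3(n+1)^\alpha - n^\alpha = \Delta^{3} x^\alpha\big|_{x=n},
\]
the third forward difference of $x^\alpha$ at $x=n$. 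The integral representation
\[
\Delta^{3} x^\alpha\big|_{x=n} = \alpha(\alpha-1)(\alpha-2)\int_0^1\!\!\int_0^1\!\!\int_0^1 (n+s+t+u)^{\alpha-3}\,ds\,dt\,du
\]
shows that this quantity carries the sign of $\alpha(\alpha-1)(\alpha-2)$: zero at $\alpha\in\{1,2\}$, negative on $(1,2)$, positive on $(2,\infty)$. Matching signs with the labelling in Figure \ref{f2} yields exactly the three cases in the statement.

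The tetracyclic case ($|E(H)|=4$) is the main technical obstacle, since eleven isomorphism classes of 4-edge graphs must be considered. The same convexity-based pairwise inequalities rule out all but a handful of candidates. The dichotomy $n=5$ versus $n\geq 6$ reflects a dimensional restriction: for $n=5$ only four vertices are available outside $u$, so the admissible $H$'s are 4-edge subgraphs of $K_4$, namely $C_4$ and $K_4-P_3$, and a direct two-way comparison picks out $H_5$. For $n\geq 6$ all eleven types can be realised, and after convexity pruning the surviving candidate is $H_4$. The decisive pairwise comparisons among the survivors will follow the same finite-difference/integral pattern established in the tricyclic step. The bulk of the work is the careful bookkeeping among the eleven candidates, which I would organise according to the degree sequence of $H$ so that the convexity arguments apply uniformly within each bucket.
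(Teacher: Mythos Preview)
Your overall reduction via Corollary~\ref{cor1} and the enumeration of the auxiliary graph $H=G-u$ is exactly the paper's strategy, and your bicyclic computation coincides with theirs (they phrase the convexity step as Jensen's inequality).

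Where you genuinely diverge is the tricyclic comparison $\chi_\alpha(G_4)-\chi_\alpha(G_5)$. The paper expands this difference by Newton's generalised binomial series, obtains $\sum_{k\ge 3}\binom{\alpha}{k}n^{\alpha-k}(3+3^k-3\cdot 2^k)$, and then proves the alternating tail is dominated term-by-term via an auxiliary inequality in $n$ and $m$. Your observation that the difference is literally the third forward difference $\Delta^3 x^\alpha|_{x=n}$, together with the integral representation
\[
\Delta^3 x^\alpha\big|_{x=n}=\alpha(\alpha-1)(\alpha-2)\int_0^1\!\!\int_0^1\!\!\int_0^1 (n+s+t+u)^{\alpha-3}\,ds\,dt\,du,
\]
is considerably cleaner: it gives the sign immediately from $\alpha(\alpha-1)(\alpha-2)$ and avoids all series manipulations. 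This is a real improvement over the paper's argument.

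The gap is in the tetracyclic step. You assert that the decisive comparisons ``follow the same finite-difference/integral pattern'', but the key one does not. The paper's computation gives
\[
\chi_\alpha(H_4)-\chi_\alpha(H_5)=2(7^\alpha-6^\alpha)+\bigl[(n+4)^\alpha-(n+3)^\alpha-2(n+2)^\alpha+3(n+1)^\alpha-n^\alpha\bigr],
\]
and the bracketed expression is \emph{not} a single higher forward difference: its coefficient vector $(-1,3,-2,-1,1)$ has vanishing zeroth and first moments but nonzero second moment, so it cannot be written as $\Delta^k x^\alpha$ for a single $k\ge 3$. In particular it does not vanish at $\alpha=2$ (it equals $2$ there), so no sign can be read off from a factor like $\alpha(\alpha-1)(\alpha-2)$. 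The paper handles this by another alternating binomial-series estimate of the same flavour as their tricyclic argument, using the extra positive term $2(7^\alpha-6^\alpha)$ to absorb what is left. Your forward-difference idea can still contribute here (e.g.\ decompose the bracket as a combination of second and higher differences and bound each piece), but as written the claim that the tetracyclic case is a routine repetition of the tricyclic pattern is not justified; you need to supply a separate argument for $H_4$ versus $H_5$.
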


\begin{figure}
   \centering
    \includegraphics[width=1.9in, height=1.2in]{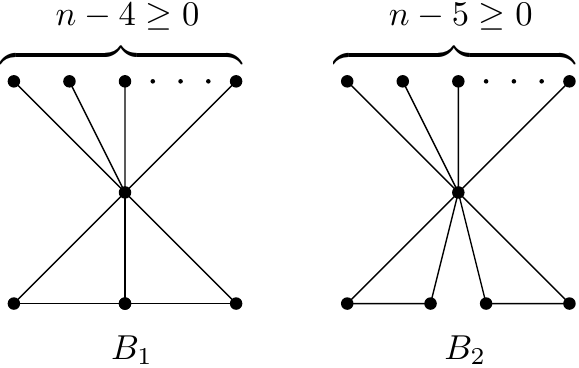}
    \caption{All non-isomorphic $n$-vertex bicyclic graphs with maximum degree $n-1$.}
    \label{f1}
     \end{figure}

\begin{figure}
   \centering
    \includegraphics[width=5in, height=1.3in]{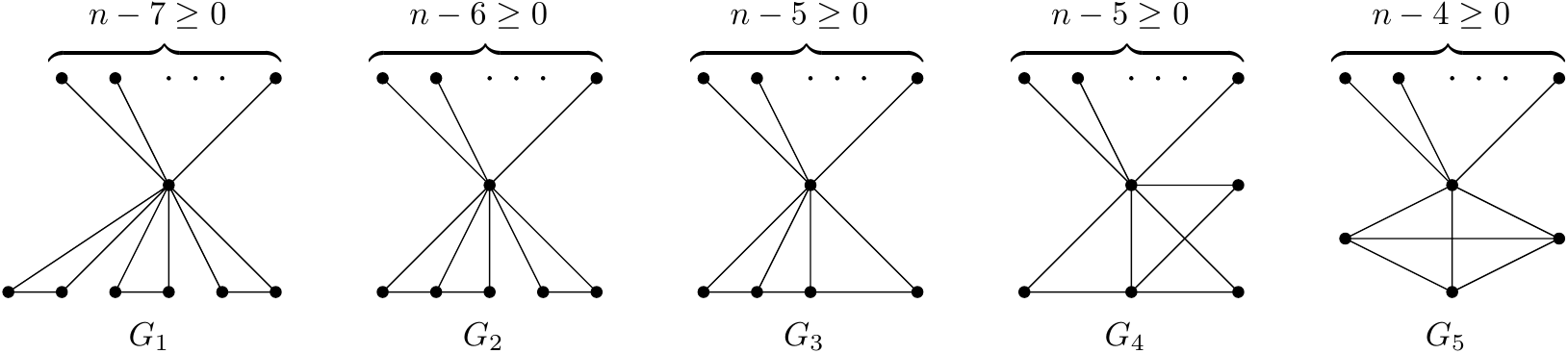}
    \caption{All non-isomorphic $n$-vertex tricyclic graphs with maximum degree $n-1$.}
    \label{f2}
     \end{figure}

\begin{figure}
   \centering
    \includegraphics[width=5in, height=3.9in]{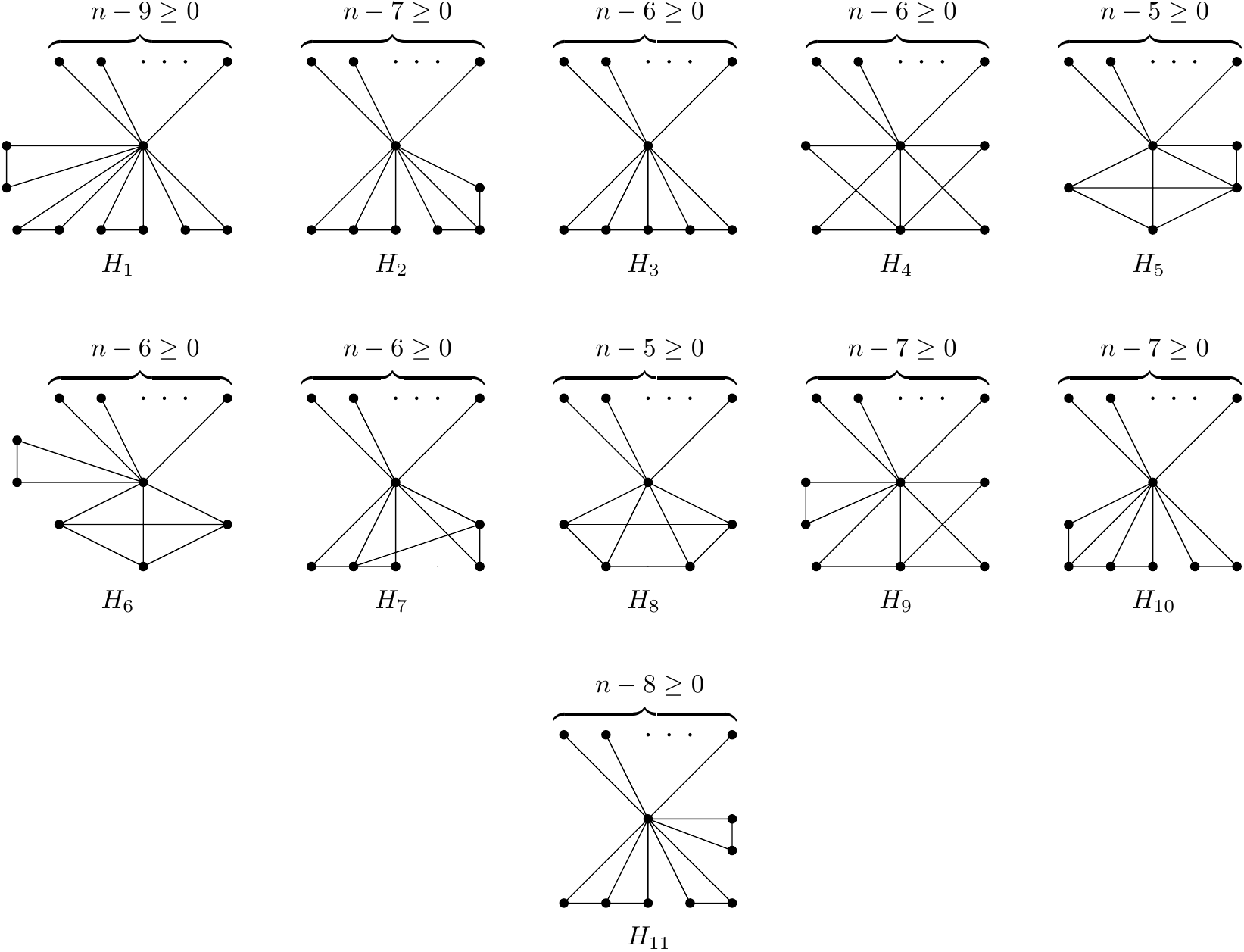}
    \caption{All non-isomorphic $n$-vertex tetracyclic graphs with maximum degree $n-1$.}
    \label{f3}
     \end{figure}

\begin{proof}
Due to Corollary \ref{cor1}, we need to consider only those graphs which have at least one vertex of degree $n-1$.
Notice that there is one $n$-vertex tree and one unicyclic graph with $n$ vertices,
namely $S_{n}$ and $S_{n}^{+}$,
respectively, with maximum vertex degree $n-1$.

If $n =4$, then there is only one bicyclic graph, with maximum degree $n-1=3$, namely $B_1$.
For $n \geq 5$, there are only two non-isomorphic $n$-vertex bicyclic graphs $B_{1}$ and $B_{2}$,
depicted in Figure \ref{f1}, with maximum vertex degree $n-1$.
In the case  $n \geq 5$, we need to compare $\chi_{\alpha}(B_{1})$ and
$\chi_{\alpha}(B_{2})$.
Simple calculations yield:
\[\chi_{\alpha}(B_{1})=2 \cdot 5^{\alpha}+2(n+1)^{\alpha}+(n+2)^{\alpha}+(n-4)n^{\alpha},\]
\[\chi_{\alpha}(B_{2})=2 \cdot 4^{\alpha}+4(n+1)^{\alpha}+(n-5)n^{\alpha}, \quad  \text{and} \]
\beq \label{eq-thm31-10}
\chi_{\alpha}(B_{1})- \chi_{\alpha}(B_{2})=2 (5^{\alpha} - 4^{\alpha})-2(n+1)^{\alpha}+(n+2)^{\alpha}+n^{\alpha}.
\eeq
%
The function $x^\alpha, \alpha \geq 1$, is convex. By Jensen inequality \cite{Jensen-1906}, we obtain
\beqs
&&\frac{1}{2}(n+2)^{\alpha}+\frac{1}{2}n^{\alpha} \geq \left(\frac{1}{2}(n+2)+\frac{1}{2}n \right)^{\alpha} =(n+1)^{\alpha}, \quad  \text{or} \\
&&-2(n+1)^{\alpha}+(n+2)^{\alpha}+n^{\alpha} \geq 0.
\eeqs
The last inequality and the fact that $2 (5^{\alpha} - 4^{\alpha})$ is positive, for $\alpha\geq1$, imply
that the right-hand side of the expression in (\ref{eq-thm31-10}) is positive, and thus, $\chi_{\alpha}(B_{1}) > \chi_{\alpha}(B_{2}) $.

All non-isomorphic $n$-vertex tricyclic graphs $G_{1},G_{2},...,G_{5}$
(which can be obtained from $B_{1}$ and $B_{2}$) are shown in Figure~\ref{f2}.
For $n=4$, the graph $G_{5}$ is the only tricyclic graph with maximum degree $n-1=3$.
For $n \geq 5$ 
the following inequalities can be easily verified for  $\alpha\geq1$:
\[\chi_{\alpha}(G_{4})-\chi_{\alpha}(G_{i})>0,\]
where $i\in\{1,2,3\}$.
Here, we show the above inequality for $i=1$. In fully analogous manner the proof for $i=2,3$ can be obtained.
By the Lagrange's Mean Value Theorem there must exist numbers $\Theta_{2}, \Theta_{3}$ satisfying $n<\Theta_{2}<n+1<\Theta_{3}<n+3$ such that
\[\chi_{\alpha}(G_{4})-\chi_{\alpha}(G_{1})=3(6^{\alpha}-4^{\alpha})+2\alpha(\Theta_{3}^{\alpha-1}-\Theta_{2}^{\alpha-1}),\]
which is positive for all $\alpha\geq1$.

Next we compare $\chi_{\alpha}(G_{4})$ and  $\chi_{\alpha}(G_{5})$. It holds that
\beq \label{eq-thm31-G-10}
\chi_{\alpha}(G_{4})-\chi_{\alpha}(G_{5}) &=& 3(n+1)^{\alpha}+(n+3)^{\alpha}-n^{\alpha}-3(n+2)^{\alpha}.
\eeq
Applying Newton's generalized binomial theorem, further we  obtain
\beq \label{eq-thm31-G-20}
&& \chi_{\alpha}(G_{4})-\chi_{\alpha}(G_{5}) = \nonumber  \\
 && 3\left( \binom{\alpha}{0} n^{\alpha}+  \binom{\alpha}{1} n^{\alpha-1}
                                                                  +  \binom{\alpha}{2} n^{\alpha-2} +  \binom{\alpha}{3} n^{\alpha-3} +  \binom{\alpha}{4} n^{\alpha-4}+ \cdots \right) \nonumber \\
                                                                   &+&  \binom{\alpha}{0} n^{\alpha}+  \binom{\alpha}{1} n^{\alpha-1} 3
                                                                  +  \binom{\alpha}{2} n^{\alpha-2} 3^2+  \binom{\alpha}{3} n^{\alpha-3}  3^3+   \binom{\alpha}{4} n^{\alpha-4}  3^4 + \cdots  \nonumber \\
                                                                  &-&n^{\alpha} \nonumber \\
                                                                  &-& 3\left( \binom{\alpha}{0} n^{\alpha}+  \binom{\alpha}{1} n^{\alpha-1} 2
                                                                  +  \binom{\alpha}{2} n^{\alpha-2} 2^2+  \binom{\alpha}{3} n^{\alpha-3}  2^3 +  \binom{\alpha}{4} n^{\alpha-4}  2^4+ \cdots \right) \nonumber \\
                                                                  &=& \binom{\alpha}{0} n^{\alpha}(3+1-1-3)+ \binom{\alpha}{1} n^{\alpha-1} (3+3-3 \cdot 2) + \binom{\alpha}{2} n^{\alpha-2}(3+3^2-3\cdot2^2)+\nonumber \\
                                                                  &+&  \binom{\alpha}{3} n^{\alpha-3} (3+3^3-3\cdot2^3) + \binom{\alpha}{4} n^{\alpha-4} (3+3^4-3 \cdot 2^4) +  \cdots \nonumber \\
                                                                  &=&  \sum_{k=0}^{\infty} B_k \nonumber
\eeq
where $B_0=\binom{\alpha}{0} n^{\alpha} (3+1-1-3)$ and $B_k=\binom{\alpha}{k} n^{\alpha-k} (3+3^k-3\cdot2^k)$, for $k \geq 1$.
Observe that $B_0=B_1=B_2=0$ and thus
\beq \label{eq-thm31-G-30}
 \chi_{\alpha}(G_{4})-\chi_{\alpha}(G_{5}) &=& \sum_{k=3}^{\infty} B_k. \nonumber
\eeq

For $\alpha=1,2$  and $k \geq 3$ the binomial coefficients $\binom{\alpha}{k}=0$, and thus,
$$
\chi_{1}(G_{4})-\chi_{1}(G_{5})=\chi_{2}(G_{4})-\chi_{2}(G_{5})=0.
$$
Since
$
\binom{\alpha}{m} = \frac{\alpha (\alpha -1) \cdots (\alpha -m+1)}{m!} ,
$
it holds that $B_m >0$, if $m < \alpha +1$.
If $m > \alpha +1$, then $B_m$ and $B_{m+1}$ are either both zero or have different sign, according to: $\alpha$ is integer or non-integer, respectively.
Next, we show that $|B_m| >|B_{m+1}|$ for $n \geq 5$, $m \geq 3$ and $m  > \alpha > 1$.
 Expanding $B_m$, we get
\beq \label{eq-thm31-G-40}
|B_m| &=& \left| \binom{\alpha}{m} n^{\alpha-m} (3+3^m-3\cdot2^m) \right| \nonumber \\
&=& \left| \frac{\alpha (\alpha -1) \cdots (\alpha -m+1)}{m!} \right| n^{\alpha-m} (3+3^m-3\cdot2^m).
\eeq
It holds that
\beq \label{eq-thm31-G-40-1}
\frac{3}{n}+ \frac{3}{n} \cdot  3^m- \frac{2}{n} \cdot 3 \cdot2^m > 0.
\eeq
Next we show that
\beq \label{eq-thm31-G-40-03}
3+3^m-3\cdot2^m  >  \frac{m-1}{m+1} \left(\frac{3}{n}+ \frac{3}{n} \cdot  3^m- \frac{2}{n} \cdot 3 \cdot2^m \right)
\eeq
is satisfied for $n \geq 5$ and $m \geq 3$.
To show that (\ref{eq-thm31-G-40-03}) holds is equivalent as to show that the function
\beq \label{eq-thm31-G-40-03-02}
f(n,m) &= & n(m+1)(3+3^m-3\cdot2^m)  -  (m-1)\left(3+ 3 \cdot  3^m- 2 \cdot 3 \cdot2^m \right)  \nonumber
\eeq
is positive for $n \geq 5$ and $m \geq 3$.
The function $f(n,m)$ increases in $n$, thus it has its lower bound for $n=5$. After the evaluation
of the function  $f(n,m)$ for $n=5$, we obatin
\beq \label{eq-thm31-G-40-03-03}
f(n,m)  \geq f(5,m) =    18-21\cdot \ 2^m+8\cdot3^m+\left(12-9\cdot2^m+2\cdot3^m\right) m, \nonumber
\eeq
which is positive for $m \geq 3$.

From $m  > \alpha > 1$, it follows further that

\beq \label{eq-thm31-G-40-05}
\frac{m-1}{m+1} \left( \frac{3}{n}+ \frac{3}{n} \cdot  3^m- \frac{2}{n} \cdot 3 \cdot2^m\right) > \frac{|\alpha -m|}{m+1}\left(\frac{3}{n}+ \frac{3}{n} \cdot  3^m- \frac{2}{n} \cdot 3 \cdot2^m \right) > 0.
\eeq
From (\ref{eq-thm31-G-40}), (\ref{eq-thm31-G-40-03}) and (\ref{eq-thm31-G-40-05}), we obtain
\beq \label{eq-thm31-G-50}
|B_m|
&>& \left| \frac{\alpha (\alpha -1) \cdots (\alpha -m+1)(\alpha -m)}{m!(m+1)} \right| n^{\alpha-m} \left( \frac{3}{n}+ \frac{3}{n} \cdot  3^m- \frac{2}{n} \cdot 3 \cdot2^m \right)  \nonumber \\
&{\color{red} =}& \left|\binom{\alpha}{m+1} \right| n^{\alpha-m-1} (3+3^{m+1}-3\cdot2^{m+1})  \nonumber \\
&=& |B_{m+1}|. \nonumber
\eeq

For $1 < \alpha < 2$, $B_3$ is negative and the rest components $B_m$, $m \geq 4$, have interchanging signs.
Since it follows that $|B_3| > B_4 > |B_5| > \cdots$, we have that
$ \chi_{\alpha}(G_{4})-\chi_{\alpha}(G_{5}) < 0$, or  $\chi_{\alpha}(G_{4}) <\chi_{\alpha}(G_{5})$.

If $\alpha$ is a non-integer larger than $2$, then all components $B_m$, $m <  \alpha+1$ are positive.
The first negative component is $B_s$,  $s =  \lceil \alpha+1 \rceil$.
It holds that all components $B_{q}, q > s$ have further interchanging signs.
Also it holds that $B_{s-1} > |B_{s}|  > B_{s+1}  > |B_{s+2}| > \cdots$, and therefore,
$ \chi_{\alpha}(G_{4})-\chi_{\alpha}(G_{5}) > 0$, or  $\chi_{\alpha}(G_{4}) >\chi_{\alpha}(G_{5})$.

If $ \alpha$ is an integer larger than $2$, then all components $B_m$, $m <  \alpha+1$ are positive, and
all components $B_s$, $s \geq \alpha+1$, are $0$.
Thus, here also $\chi_{\alpha}(G_{4}) >\chi_{\alpha}(G_{5})$.
%


All non-isomorphic
$n$-vertex tetracyclic graphs $H_{1},H_{2},...,H_{11}$ (which can be obtained from
$G_{1},G_{2},...,G_{5}$), with  maximum vertex degree $n-1$, are shown in Figure~\ref{f3}.
In an analogous manner as in the case of tricyclic graphs, the following inequalities can be verified for  $\alpha\geq1$ and $n \geq 6$:
\[  \chi_{\alpha}(H_{5})-\chi_{\alpha}(H_{8})>0, \ \ \ \chi_{\alpha}(H_{4})-\chi_{\alpha}(H_{j})>0,\]
where $j\in\{1,2,3,...,11\}\setminus\{4,5,8\}$.

Now, we prove that the difference $\chi_{\alpha}(H_{4})-\chi_{\alpha}(H_{5})$ is positive for $\alpha\geq1$ and $n \geq 6$.
Routine calculations yield
\begin{equation}\label{Eq.123}
\chi_{\alpha}(H_{4})-\chi_{\alpha}(H_{5})=2(7^{\alpha}-6^{\alpha})+(n+4)^{\alpha}-2(n+2)^{\alpha}
+3(n+1)^{\alpha}-(n+3)^{\alpha}-n^{\alpha}.
\end{equation}
For $\alpha=1,2$, the right hand side of Equation (\ref{Eq.123}) is positive,
 and thus, the claim of the theorem in this case holds.

Next we prove the claim of the theorem for tetracyclic graphs
in the cases  $1 < \alpha < 2$  and $\alpha>2$, $n \geq 6$.
Applying Newton's generalized binomial theorem in Equation (\ref{Eq.123}), we have
\begin{equation}\label{Eq.124}
\chi_{\alpha}(H_{4})-\chi_{\alpha}(H_{5})=2(7^{\alpha}-6^{\alpha})+\sum_{k=2}^{\infty} A_k,
\end{equation}
where $A_k=\binom{\alpha}{k} n^{\alpha-k}\left(4^k-2^{k+1}+3-3^k\right)$.

Now, we prove that the inequality $|A_m| >|A_{m+1}|$ holds for $m\ge2, n\ge6$ and $\alpha>1$.
Firstly, we show that
\begin{equation}\label{EQ-AA0}
4^{m} - 2^{m+1} + 3 - 3^{m} > \frac{m-1}{m+1}\cdot\frac{4^{m+1} - 2^{m+2} + 3 - 3^{m+1}}{n},
\end{equation}
for $n\ge6$ and $m\ge2$.
The Inequality (\ref{EQ-AA0}) holds if and only if the function
\beq \label{eq-thm31-G-40-03-02A}
g(n,m) &= & n(m+1)(4^{m} - 2^{m+1} + 3 - 3^{m} )  -  (m-1)\left(4^{m+1} - 2^{m+2} + 3 - 3^{m+1} \right)  \nonumber
\eeq
is positive valued for $n \geq 6$ and $m \geq 2$.
The function $g$ is increasing in $n$, thus it attains minimum value at $n=6$ and thereby we have
\beq \label{eq-thm31-G-40-03-03A}
g(n,m)  \geq 21-8\cdot \ 2^{m+1}-9\cdot3^m + 10\cdot 4^m +\left(15-4\cdot2^{m+1}-3\cdot3^m + 2\cdot4^m\right) m, \nonumber
\eeq
which is positive for $m \geq 2$. Bearing in mind the assumption $m  > \alpha > 1$ and the fact that the inequality $4^{m+1} - 2^{m+2} + 3 - 3^{m+1} > 0$ holds for $m\ge2$, we have
\beq \label{EQ-AA2}
\frac{m-1}{m+1} \left(\frac{4^{m+1} - 2^{m+2} + 3 - 3^{m+1}}{n}\right) > \frac{|\alpha -m|}{m+1}\left(\frac{4^{m+1} - 2^{m+2} + 3 - 3^{m+1}}{n} \right).
\eeq
Using (\ref{EQ-AA0}) and (\ref{EQ-AA2}), we obtain
\beq \label{eq-thm31-G-40}
|A_m| &=& \left| \frac{\alpha (\alpha -1) \cdots (\alpha -m+1)}{m!} \right| n^{\alpha-m} (4^{m} - 2^{m+1} + 3 - 3^{m}) \nonumber \\
&>& |A_{m+1}|.
\eeq
For $1 < \alpha < 2$, $A_2$ is positive and the remaining components $A_3, A_4, A_5, \cdots$ have interchanging signs.
Since we have just proved that $A_2 > |A_3| > A_4 > |A_5| > \cdots$, we have
$ \chi_{\alpha}(H_{4})-\chi_{\alpha}(H_{5}) > 0$.

If $\alpha > 2$ then all components $A_p$ (for $p <  \alpha+1$) are positive. If $\alpha$ is an integer larger than $2$, then all components $A_q$ (for $q \geq \alpha+1$) are $0$. If $\alpha$ is a non-integer larger than $2$, then the first negative component is $A_r$, where  $r =  \lceil \alpha+1 \rceil$.
All the components $A_{r}, A_{r+1}, A_{r+2}, \cdots$ have interchanging signs. Also, it holds that $A_{r-1} > |A_{r}|  > A_{r+1}  > |A_{r+2}| > \cdots$, and therefore we have $\chi_{\alpha}(H_{4})-\chi_{\alpha}(H_{5}) > 0$ for $\alpha > 2$.


Finally, we prove the result for tetracyclic graphs  when $n=5$.
There are only two 5-vertex tetracyclic graphs, namely $H_5$ and $H_8$.
 In this case $\chi_{\alpha}(H_{5})-\chi_{\alpha}(H_{8}) = 8^{\alpha} - 6^{\alpha}$, which is positive for $\alpha\geq1$.
 Hence, the proof of the theorem is completed.

\end{proof}

\begin{rem}\label{rem1}
To the best of our knowledge, there are no results in literature concerning maximal
general sum-connectivity index, for $\alpha \geq1$, of  unicyclic and tetracyclic graphs.
Partially Theorem~\ref{thm2} has already been proved:
see \cite{Zhou10} for trees, \cite{Tache14} for bicyclic graphs and  \cite{Ali17,Zhu16} for tricyclic graphs.
However, the proof of Theorem~\ref{thm2} is significantly shorter than the proof of the main result
established in \cite{Tache14}.
Also, for $\alpha \geq1$, the comparison between
$\chi_{\alpha}(G_{4})$ and $\chi_{\alpha}(G_{5})$ was left as an open problem in both the references \cite{Ali17,Zhu16}. Here, this problem has been solved.


\end{rem}


\begin{rem}\label{rem1b}
Recall that $\chi_{2}$ is same as the Hyper-Zagreb index \cite{Shirdel13}.
Recently, Gao \textit{et al.} \cite{Gao17} determined the graphs having maximum $\chi_{2}$
value among trees, unicyclic and bicyclic graphs.
Theorem~\ref{thm2} provides a short and unified approach to all these results.
\end{rem}

Next, we  consider
the graphs with maximal
general Platt index $Pl_{\alpha}$.
The proof of next result is fully analogous to that of Theorem \ref{thm2} and will be omitted.
The extremal graphs were already mentioned previously and are depicted in
Figures~\ref{f1}, \ref{f2} and  \ref{f3}.

\begin{thm}\label{thm4}
For $\alpha \geq1$, the graphs $S_{n},S_{n}^{+}$ and $B_{1}$
have the maximum value of the general Platt index $Pl_{\alpha}$
among the $n$-vertex trees, unicyclic graphs  and bicyclic grpahs, respectively.
Among the $n$-vertex tricyclic graphs, either the graph $G_{4}$ or $G_{5}$
has the maximum $Pl_{\alpha}$ value, for $\alpha\geq1$ and $n \geq 5$.
Particularly,
    \begin{equation*}
                \begin{array}{ll}
                    Pl_{\alpha}(G_{4}) < Pl_{\alpha}(G_{5}) & \quad \text{for} \quad n \geq 5, 1 < \alpha < 2, \\
                    Pl_{\alpha}(G_{4}) > Pl_{\alpha}(G_{5}) & \quad \text{for} \quad n \geq 5, \alpha > 2, \\
                    Pl_{\alpha}(G_{4}) = Pl_{\alpha}(G_{5}) & \quad \text{for} \quad  n \geq 5, \alpha =1,2.
                \end{array}
    \end{equation*}
Finally,
$H_{4}$, respectively $H_5$, (depicted in Figure \ref{f3}) has the maximum $\chi_{\alpha}$
value, for $\alpha\geq1$ and $n \geq 6$, respectively for $\alpha\geq1$ and $n=5$, among the $n$-vertex tetracyclic graphs.
\end{thm}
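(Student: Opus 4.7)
The plan is to mirror the architecture of the proof of Theorem~\ref{thm2}, substituting the Platt edge weight $(d_u+d_v-2)^\alpha$ for $(d_u+d_v)^\alpha$ throughout. By Corollary~\ref{cor3}, I restrict attention to $(n,m)$-graphs having a vertex of degree $n-1$, which immediately settles the tree and unicyclic classes since $S_n$ and $S_n^+$ are the sole candidates. For the bicyclic class, the two candidates $B_1, B_2$ of Figure~\ref{f1} satisfy
\[
Pl_\alpha(B_1) - Pl_\alpha(B_2) = 2(3^\alpha - 2^\alpha) + \bigl[n^\alpha + (n-2)^\alpha - 2(n-1)^\alpha\bigr],
\]
in which the bracketed term is non-negative by convexity of $x \mapsto x^\alpha$ for $\alpha \geq 1$ (Jensen's inequality applied to the pair $\{n, n-2\}$), while the first summand is strictly positive.

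For the tricyclic case I would follow the two-stage strategy of Theorem~\ref{thm2}: Lagrange's Mean Value Theorem disposes of $Pl_\alpha(G_4) > Pl_\alpha(G_i)$ for $i \in \{1,2,3\}$ exactly as in the $\chi_\alpha$ argument, and the decisive comparison is
\[
Pl_\alpha(G_4) - Pl_\alpha(G_5) = 3(n-1)^\alpha + (n+1)^\alpha - 3n^\alpha - (n-2)^\alpha.
\]
Applying Newton's generalized binomial theorem expands this as $\sum_{k \geq 0}\binom{\alpha}{k} n^{\alpha-k} C_k$ with $C_k = 3(-1)^k + 1 - (-2)^k$. Direct inspection gives $C_0 = C_1 = C_2 = 0$; for $k \geq 3$ the coefficient $C_k$ has sign $(-1)^{k+1}$ and magnitude $2^k - 2$ (odd $k$) or $2^k - 4$ (even $k$). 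For $\alpha \in \{1,2\}$ the binomials $\binom{\alpha}{k}$ kill the tail, giving equality. For $1 < \alpha < 2$ the alternating signs of $\binom{\alpha}{k}$ combine with those of $C_k$ to render every nonzero $B_k$ negative, yielding $Pl_\alpha(G_4) < Pl_\alpha(G_5)$. For $\alpha > 2$ a ratio estimate $|B_m| > |B_{m+1}|$ (proved for $m > \alpha$ and $n \geq 5$ by bounding $|B_m|/|B_{m+1}| = \frac{n(m+1)}{|\alpha-m|}\cdot\frac{|C_m|}{|C_{m+1}|}$ below by $1$), together with the sign pattern, shows that the leading positive $B_3$ dominates, giving the reverse inequality.

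For the tetracyclic case the same outline is applied to the eleven graphs $H_1,\ldots,H_{11}$ from Figure~\ref{f3}. Routine Lagrange and convexity arguments reduce the maximum candidates to $H_4, H_5, H_8$, with $Pl_\alpha(H_5) > Pl_\alpha(H_8)$ immediate. The decisive comparison expands via Newton's binomial theorem as
\[
Pl_\alpha(H_4) - Pl_\alpha(H_5) = 2(5^\alpha - 4^\alpha) + \sum_{k \geq 2}\binom{\alpha}{k} n^{\alpha-k} D_k,
\]
with $D_k = 2^k + 3(-1)^k - 1 - (-2)^k$, which one verifies is strictly positive for every $k \geq 2$. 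Together with the strictly positive constant $2(5^\alpha - 4^\alpha)$ and a parallel ratio estimate $|B_m| > |B_{m+1}|$ (for $n \geq 6$, $m \geq 2$, $m > \alpha > 1$), this yields $Pl_\alpha(H_4) > Pl_\alpha(H_5)$ for all $\alpha \geq 1$ and $n \geq 6$. The base case $n = 5$ is handled by hand: only $H_5$ and $H_8$ exist, and $Pl_\alpha(H_5) - Pl_\alpha(H_8) = 6^\alpha - 4^\alpha > 0$.

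The main technical obstacle, just as in Theorem~\ref{thm2}, is the ratio estimate $|B_m| > |B_{m+1}|$ needed for non-integer $\alpha > 2$. In the Platt setting this reduces to lower-bounding the ratios $|C_m|/|C_{m+1}|$ and $|D_m|/|D_{m+1}|$; these tend to $1/2$ and are at least $2/5$ for the small values of $m$ that matter, so the required domination follows for $n \geq 5$ (respectively $n \geq 6$) by elementary arithmetic mirroring the estimates in Theorem~\ref{thm2}.
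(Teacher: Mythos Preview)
Your architecture matches the paper's own approach, which simply declares the proof ``fully analogous to that of Theorem~\ref{thm2}'' and omits it. Most of your transcription is correct; in particular the bicyclic and tetracyclic comparisons port over cleanly (your coefficients $D_k=2^k+3(-1)^k-1-(-2)^k$ are indeed all positive for $k\ge2$, just as the paper's $A_k$ are), and your observation that for $1<\alpha<2$ every Newton term $B_k=\binom{\alpha}{k}n^{\alpha-k}C_k$ is negative (since $C_k$ has sign $(-1)^{k+1}$ while $\binom{\alpha}{k}$ has sign $(-1)^k$ for $k\ge3$) is a genuine simplification over the alternating-series argument needed for $\chi_\alpha$.

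There is, however, a real gap in the tricyclic case for $\alpha>2$. In the proof of Theorem~\ref{thm2} the Newton coefficients $3+3^k-3\cdot2^k$ are \emph{positive} for every $k\ge3$, so the terms $B_m$ with $3\le m\le\lfloor\alpha\rfloor$ are all positive and the ratio estimate $|B_m|>|B_{m+1}|$ is invoked only after the binomials change sign, i.e.\ for $m>\alpha$. In your Platt case the coefficients $C_k$ \emph{alternate} in sign, so the terms $B_m$ with $3\le m\le\lfloor\alpha\rfloor$ already alternate. You correctly claim the ratio estimate only for $m>\alpha$, but for large $\alpha$ it fails for small $m$: at $m=3$ one has $|B_3|/|B_4|=\tfrac{4n}{|\alpha-3|}\cdot\tfrac12=\tfrac{2n}{\alpha-3}$, which is below $1$ whenever $\alpha>2n+3$. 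Hence ``the leading positive $B_3$ dominates'' is not established by your argument.

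A clean fix, specific to the Platt situation, bypasses the series altogether: observe that
\[
Pl_\alpha(G_4)-Pl_\alpha(G_5)=(n+1)^\alpha-3n^\alpha+3(n-1)^\alpha-(n-2)^\alpha
\]
is exactly the third forward difference $\Delta^3 g(n-2)$ of $g(x)=x^\alpha$. By the mean-value theorem for finite differences there exists $\xi\in(n-2,n+1)$ with $\Delta^3 g(n-2)=g'''(\xi)=\alpha(\alpha-1)(\alpha-2)\,\xi^{\alpha-3}$, whose sign for $n\ge5$ is precisely that of $\alpha-2$. This yields the full trichotomy directly.
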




\begin{rem}\label{rem2}
Recall that $Pl_{1}(G)=Pl(G)=M_{1}(G)-2m$. Ili\'{c} and Zhou \cite{Ilic12} proved that among the line graphs of all $n$-vertex unicyclic graphs, the line graph of $S_{n}^{+}$ has the maximum number of edges. This result also follows from Lemma \ref{lem1} and from the fact that $S_{n}^{+}$ is the unique graph with maximum $Pl$ (and $M_{1}$) value among unicyclic graphs.
\end{rem}

\begin{rem}\label{rem2a}
Ili\'{c} and Zhou \cite{Ilic12} proved that $S_{n}^{+}$ is the unique graph with maximum first reformulated Zagreb index
(which coincides with $Pl_{2}$) among unicyclic graphs.
Ji \textit{et al.} \cite{Ji14} (and Ji \textit{et al.} \cite{Ji15}) characterized the graph
having maximum $Pl_{2}$ value among trees, unicyclic graphs, bicyclic graphs (and among tricyclic graphs respectively).
Theorem \ref{thm4} provides a short and unified approach to all these aforementioned results of \cite{Ji14,Ji15}.
\end{rem}

Finally, we  characterize  trees, unicyclic, bicyclic, tricyclic and tetracyclic graphs  with maximal
variable sum exdeg index $SEI_{a}, a > 1$.

\begin{thm}\label{thm6}
For $a>1$, the graph $S_{n},S_{n}^{+},B_{1},G_{4},H_{4}$ has the maximum $SEI_{a}$ value among trees, unicyclic graphs, bicyclic, tricyclic and tetracyclic graphs respectively,
respectively.
\end{thm}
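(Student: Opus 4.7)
The plan is to follow the structure of the proof of Theorem~\ref{thm2}, but a substantial simplification specific to $SEI_a$ is available. Swapping the order of summation over edges and vertices gives
\begin{equation*}
SEI_a(G) \;=\; \sum_{uv \in E(G)} \bigl(a^{d_u} + a^{d_v}\bigr) \;=\; \sum_{v \in V(G)} d_v\, a^{d_v},
\end{equation*}
so $SEI_a$ depends only on the degree sequence of $G$. Combined with Corollary~\ref{cor5}, which forces any $SEI_a$-maximal $(n,m)$-graph (for $a>1$) to contain a vertex of degree $n-1$, this reduces the whole problem to comparing the value of $\sum_v d_v\,a^{d_v}$ on finitely many explicit degree sequences, namely those of the graphs enumerated in Figures~\ref{f1}, \ref{f2}, \ref{f3}, together with $S_n$ and $S_n^+$ for the tree and unicyclic classes.

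The tree and unicyclic statements are immediate since $S_n$ and $S_n^+$ are the unique $n$-vertex graphs of their classes with a vertex of degree $n-1$. For the bicyclic case, reading off the degree sequences $(n-1,3,2,2,\underbrace{1,\ldots,1}_{n-4})$ for $B_1$ and $(n-1,2,2,2,2,\underbrace{1,\ldots,1}_{n-5})$ for $B_2$ and substituting into the identity above gives
\begin{equation*}
SEI_a(B_1) - SEI_a(B_2) \;=\; 3a^3 - 4a^2 + a \;=\; a\,(a-1)\,(3a-1),
\end{equation*}
which is positive for $a>1$. The tricyclic and tetracyclic cases follow the same template: one writes down the degree sequences of $G_1,\ldots,G_5$ (respectively $H_1,\ldots,H_{11}$) and verifies that each difference $SEI_a(G_4)-SEI_a(G_i)$ and $SEI_a(H_4)-SEI_a(H_j)$ factors as $(a-1)$ times a polynomial that is manifestly positive on $(1,\infty)$. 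The factor $(a-1)$ must appear because $SEI_1(G)=\sum_v d_v=2m$ is constant across $(n,m)$-graphs, so every difference vanishes at $a=1$. The tetracyclic subcase $n=5$ (where only $H_5$ and $H_8$ occur) is disposed of by the direct calculation $SEI_a(H_5)-SEI_a(H_8)=2a^2(a-1)(2a-1)>0$.

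The main obstacle is the bookkeeping over the eleven tetracyclic candidates of Figure~\ref{f3}, not any analytic subtlety, since every polynomial difference produced this way is of small degree and trivially factors through $(a-1)$. A cleaner conceptual shortcut that sidesteps the case analysis entirely is the following Schur-convexity argument. Because $\phi(x)=x\,a^x$ satisfies
\begin{equation*}
\phi''(x) \;=\; a^x \ln a\,(2 + x \ln a) \;>\; 0 \quad \text{on}\quad [1,\infty) \text{ for } a>1,
\end{equation*}
the map that sends a graph to $\sum_v \phi(d_v)=SEI_a(G)$ is Schur-convex in the degree sequence. One then checks that within each class the degree sequence of $B_1$ (respectively $G_4$ and $H_4$) majorizes the degree sequence of every other candidate with maximum degree $n-1$, and the theorem follows immediately. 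Either route completes the proof.
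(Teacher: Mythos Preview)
Your first route---using Corollary~\ref{cor5} to reduce to the finite lists in Figures~\ref{f1}--\ref{f3}, then exploiting the vertex-sum identity $SEI_a(G)=\sum_v d_v\,a^{d_v}$ to compare degree sequences and factoring each difference through $(a-1)$---is correct and is essentially the paper's own proof, only tidied up. The paper proceeds by exactly the same reduction and then computes the pairwise differences directly (e.g.\ $SEI_a(B_1)-SEI_a(B_2)=a(1-4a+3a^2)$, which is your $a(a-1)(3a-1)$), so on this route you and the paper agree.

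The Schur-convexity ``shortcut'' you propose, however, does not go through. The claim that the degree sequence of $G_4$ majorizes that of every other tricyclic candidate is false. From your own calculation (or from the paper's value $SEI_a(G_4)-SEI_a(G_5)=a(4a^3-9a^2+6a-1)$) one reads off the degree sequences
\[
G_4:\ (n-1,\,4,\,2,\,2,\,2,\,1^{n-5}),\qquad G_5:\ (n-1,\,3,\,3,\,3,\,1^{n-4}),
\]
and the partial sums at position~4 are $n+7$ for $G_4$ versus $n+8$ for $G_5$. Thus $G_4$ and $G_5$ are incomparable in the majorization order, and Schur-convexity of $\sum_v \phi(d_v)$ says nothing about which of the two is larger. (This is consistent with Theorem~\ref{thm2}, where the paper shows that for the general sum-connectivity index the winner between $G_4$ and $G_5$ actually flips depending on $\alpha$.) The same incomparability arises among the tetracyclic candidates. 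So the majorization route cannot replace the direct computation; you must fall back on your first approach for the pairs that are not comparable.
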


\begin{proof}
Due to Corollary \ref{cor5}, we need to consider only those graphs which have at least one vertex of degree $n-1$.
Notice that there is only one $n$-vertex tree and only one unicyclic graph, namely $S_{n}$ and $S_{n}^{+}$ respectively,
having maximum vertex degree $n-1$. Also, there are only two non-isomorphic $n$-vertex bicyclic graphs $B_{1}$ and $B_{2}$,
depicted in Figure \ref{f1}. Moreover, all non-isomorphic $n$-vertex tricyclic graphs $G_{1},G_{2},...,G_{5}$ and
all non-isomorphic $n$-vertex tetracyclic graphs $H_{1},H_{2},...,H_{11}$ are shown in Figure \ref{f2} and Figure \ref{f3}, respectively.
Direct calculations yield:
$SEI_{a}(B_{1})-SEI_{a}(B_{2})=a(1-4a+3a^{2}), SEI_{a}(G_{4})-SEI_{a}(G_{1})=2a(1-3a+2a^{3}),
SEI_{a}(G_{4})-SEI_{a}(G_{2})=a(1-2a-3a^{2}+4a^{3}), SEI_{a}(G_{4})-SEI_{a}(G_{3})=2a^{2}(1-3a+2a^{2}),
SEI_{a}(G_{4})-SEI_{a}(G_{5})=a(-1+6a-9a^{2}+4a^{3})$, which are all positive for $a>1$. 
After similar calculations, we have $SEI_{a}(H_{4})-SEI_{a}(H_{i})>0$ for all $i\in\{1,2,3,...,11\}$ and $a>1$, which completes the proof.
\end{proof}

\begin{rem}\label{rem3}
Recently, Ghalavand and Ashrafi \cite{Ghalavand17} determined the graphs with maximum $SEI_{a}$ value for $a>1$ among all the $n$-vertex trees, unicyclic, bicyclic and tricyclic graphs.
They claimed that the graph $G_{3}$ has the maximum $SEI_{a}$ value for $a>1$
among tricyclic graphs.
However, Theorem~\ref{thm6} contradicts this claim.
Also, Theorem~\ref{thm6} provides alternative proofs of four main results proved in \cite{Ghalavand17}.
Moreover, here, we have extended the results from \cite{Ghalavand17} for tetracyclic graphs.
\end{rem}

\section{Concluding remarks}

One of the most studied problems in chemical graph theory is the problem of characterizing extremal elements,
with respect to some certain topological indices, in the collection of all $n$-vertex trees, unicyclic, bicyclic, tricyclic and tetracyclic
graphs. In the present study, we have addressed this problem for the BID indices
(which is a subclass of the class of all degree based topological indices).
We have been able to characterize the graphs with the maximum general sum-connectivity index ($\chi_\alpha$) for $\alpha\geq1$,
general Platt index ($Pl_\alpha$) for $\alpha\geq1$ and variable sum exdeg index ($SEI_{a}$) for $a>1$
in the aforementioned graph collections. Some of these results are new while the already existing
results have been proven in a shorter and unified way.
Lemma~\ref{thm1} confirms the existence of a vertex of degree $n-1$ in the extremal graph, with respect to some
BID indices that satisfy certain conditions. 

As in most of the cases, the extremal (either maximum or minimum) $(n,m)$-graph with respect
to a topological index has a vertex of degree $n-1$. Thereby, it would be interesting
to prove this fact for the BID indices other than those considered here
as well as for other type of topological indices
(such as spectral indices, eccentricity-based indices, distance-based indices, degree-based indices etc.).


\end{document}